\newcommand\F{\mbox{I\kern-2pt F}}
\newcommand\cA{{\cal A}}
\newcommand\cF{{\cal F}}
\newcommand\cG{{\cal G}}
\newcommand\cH{{\cal H}}
\newcommand\cL{{\cal L}}
\newcommand\cB{{\cal B}}
\newcommand\cX{{\cal X}}
\newcommand\cD{{\cal D}}
\newcommand\cP{{\cal P}}
\def\bbr{{\mathbb R}}
\def\bbn{{\mathbb N}}
\def\bbf{{\mathbb F}}
\def\bbg{{\mathbb G}}
\def\R{{\mathbb R}}
\newcommand\E{{\bf E}}
\newcommand\1{{\bf 1}}
\newtheorem{theo}{Theorem}[section]
\newtheorem{lemm}[theo]{Lemma}
\newcommand\beq{\begin{equation}}
\newcommand\eeq{\end{equation}}
\newcommand\bea{\begin{eqnarray}}
\newcommand\eea{\end{eqnarray}}
\newcommand\bean{\begin{eqnarray*}}
	\newcommand\eean{\end{eqnarray*}}
\newenvironment{proof}{{\it Proof.}}{{$\square$}}
\newcommand{\wh}{\widehat}
\newcounter{assumptioncounter} 
\renewcommand{\theassumptioncounter}{A.\arabic{assumptioncounter}}
\newcommand{\assumptionnumber}{}
\title{Skorokhod Transition in the Conic Market Model}
\author{Artur Sidorenko}
\date{\today}
\begin{document}

\maketitle

\begin{abstract}
This paper examines the applicability of the Skorokhod representation theorem in filtrated probability spaces for the utility maximization problem in the Kabanov conic model of multi-asset markets with proportional transaction costs. A key challenge is that the theorem does not necessarily preserve adaptedness, meaning that solutions obtained on an auxiliary probability space may not correspond to those on the original one. We establish that, under mild conditions, the Bellman functionals remain consistent across different probability spaces.
\end{abstract}

{\bf Keywords} {Markets with transaction cost $\cdot$ Portfolio optimization $\cdot$ Skorokhod representation $\cdot$ Weak convergence}

\section{Introduction}

This paper explores a delicate issue concerning the applicability of the Skorokhod representation theorem in filtrated probability spaces, particularly in the context of mathematical finance. A well-known challenge is that the Skorokhod representation does not necessarily preserve the adaptedness of stochastic processes. Consequently, the solution to a problem obtained on a new probability space may bear little resemblance to the original problem on the initial probability space.

We investigate the applicability of Skorokhod transfer in the context of multi-asset utility maximization under proportional transaction costs. To establish the validity of Skorokhod’s theorem in filtrated settings, two primary approaches have been proposed in the literature. One method requires that the underlying filtration is generated by a process with independent increments \cite{Chau-Rasonyi2017, CR2020}. The second approach relies on the prediction process (see, e.g., \cite{Aldous1981, Jakubowski-Slominski1986}) and the concept of extended weak convergence \cite{BDD} for the price process. Additionally, we note the existence of a version of the Skorokhod representation theorem for filtrated probability spaces \cite{Hoover1991}, which is based on the notion of convergence in adapted distribution \cite{Hoover-Keisler1984}, a stronger form of extended weak convergence.

While the paper \cite{BDD} employs the Skorokhod transfer in filtrated spaces, its treatment remains implicit and lacks  explicit formalization. This gap highlights the need for a more rigorous examination, which we address in this study.

We examine both of the aforementioned approaches by considering two models of financial markets, where the stochastic bases and price processes define the respective filtrated probability spaces. We establish that, under mild conditions, the portfolio optimization problems formulated on different probability spaces lead to identical Bellman functionals.

To streamline the theoretical investigation, we employ the geometric framework of Y. Kabanov \cite{K, KL2002, KabS}, which provides an efficient approach to modeling multi-asset financial markets with transaction costs.

The paper is organized as follows. Section \ref{section:setting} introduces the setting of the utility maximization problem. Section \ref{section:main} presents the main results. Auxiliary statements are relegated to the Appendix.

\section{Setting}
\label{section:setting}

We use the notation $\bbr_{++} := (0, \infty)$. Let $p, d \in \bbn$ and $T \in \bbr_{++}$. We denote $\cD^d_T$ the space of all RCLL (right-continuous with left limits) functions $f: [0, T] \mapsto \bbr^d$.
We rely on the notation of Section 3.6 of \cite{KabS}. 

We consider a model ${\bf M} := (\Omega, \cF, \bbf, P, Y, S)$ of a financial market, where the quadruple $(\Omega, \cF, \bbf, P)$ is a stochastic basis, filtration $\bbf := (\cF_t)_{t=0}^T$ is generated by the null sets of $\cF$ and an $\bbr^p$-valued process $Y$ with RCLL paths, $S$ is an $\bbr^d_{++}$-valued adapted process with continuous paths, $S_0 := {\bf 1}$ and $S^1 = 1$. The latter process is interpreted as prices of $d$ tradable assets, and the first asset is called a numeraire. We fix a closed proper convex $K \subset \bbr^d$ and $\bbr^d_{++} \subset K$. Recall that a closed convex cone $K$ is proper if $K \cap (-K) = \{0\}$. The dual cone $K^* = \{ y \in \bbr^d \colon xy \geq 0 \, \forall x \in K  \}$. It is easily seen that $K^* \setminus \{0\} \subset \bbr^d_{++}$. Thanks to the bipolar theorem, the second dual $K^{**} = K$. 

A function $f: [0, T] \to \bbr^d$ is called $K$-increasing if $f(t) - f(s) \in K$. A function $f: [0, T] \to \bbr^d$ is called $K$-decreasing if $f$ is $-K$-increasing. We put $\cH_K$ the set of all $K$-decreasing paths from $\cD^d_T$. Any adapted  $B = (B_t)_{t = 0}^T$ process with paths from $\cH_K$ is called a strategy. Since $K$ is proper, the paths of $B$ are of finite variation, see Lemma \ref{lemm:var} in Appendix. In this setting, the $j$-th entry of the difference $B_t - B_s$ is the amount of asset $j$ bought/sold in the units of the numeraire throughout the interval $(s, t]$, and $B_0$ is the initial trade at time $0$.

Now we formulate the portfolio optimization problem. Let $x \in K$ be the initial value. We see the portfolio wealth as a vecrtor-valued process, where each entry is corresponded to the amount of asset $i$. This amount can be defined in both physical units and units of the numeraire. The portfolio wealth in physical units is defined as
$$
\wh V^j_t := x^j + \int_{[0, t]} \frac{1}{S^j_s} dB^j_s,
$$
where the integration is understood in the Riemann--Stieltjes sense. By convention, $B_{0-} = 0$ and $B_0$ is the Dirac measure at the initial point 0. In a more concise notation, the latter equality can be written as
$$
\wh V := x + \frac{1}{S} \cdot B,
$$
where $\cdot$ is the component-wise Riemann-Stieltjes integration. The portfolio wealth in the numeraire units is
$$
V^j = S^j \wh V^j.
$$
or
$$
V = S \odot \wh V,
$$
where $\odot$ is the Hadamard multiplication. If $S$ is a semi-martingale, then
$$
V = x + V \cdot Z + B,
$$
where $Z^j = 1 + 1/S^j \cdot S^j$ is the stochastic logarithm of $S^j$ and the stochastic integration of vector-valued processes is understood in the component-wise sense.

To undeline the dependence of $V$ on $x$ and $B$, we use the notation $V^{x, B}$.

A strategy $B$ is admissible if $V^{x, B}_t \in K$ a.s. for every $t \in [0, T]$. Since the paths of $V$ are RCLL, this is equivalent to $V^{x,B} \in K$ up to evanescence. We denote $\cA(x)$ the class of all admissible strategies. 

Let $U: K \mapsto \R \cup \{- \infty\}$ be a mapping interpreted as the utility function. We consider the following portfolio optimization problem
$$
u(x) := \sup_{B \in \cA(x)} \E \, U(V^{x, B}_T).
$$
We call $u(x)$ the Bellman function. To underline the model ${\bf M}$, we use the notation $u(x, {\bf M})$.

\section{Main result}
\label{section:main}

The main problem of this paper is as follows. Let ${\bf M}$ and ${\bf \tilde M}$ be two different models with the probability distributions $\cL_P(S) = \cL_{\tilde P}(\tilde S)$. We are interested in sufficient conditions for the Bellman functions of different models to coincide, i.e. conditions when $u(x, {\bf M}) = u(x, {\bf \tilde M})$. 


We impose the following assumption on model ${\bf M}$:

\smallskip
{\bf H.} The process $Y$ has independent increments (by convention, $Y_{0-} = 0$ and $Y_0$ is considered as an increment).

\smallskip

Without loss of generality, one may assume that $\Omega$ carries a random variable $\xi$ independent of $\cF^{Y}_T$ with uniform distribution in $[0, 1]$.  We define $\tilde Y := (Y, \xi {\bf 1}_{\llbracket 0, \infty \llbracket})$. Aside from the model ${\bf M}$, we consider a model ${\bf  N} = (\Omega, \cF,  \bbg := (\hat \cG_t)_{t=0}^T, P, Y, S)$, where $\cG_t = \sigma \{ \cF_t, \xi \}$.
Note that if ${\bf M}$ satisfies {\bf H}, then so does ${\bf  N}$.


We denote 
\beq
Y^{t} := Y \1_{\llbracket 0, t \llbracket} + Y_t \1_{\llbracket t, \infty \llbracket}  
\eeq
and
\beq
_t \! Y := (Y - Y_t) \1_{\llbracket t, \infty \llbracket}.  
\eeq

While proving the next two theorems, we adopt the techniques of \cite{Chau-Rasonyi2017}.

\begin{theo}
\label{theo:rand}
Let model ${\bf M}$ satisfy {\bf H}. Then for every $x \in {\rm int} \, K$
$$
u(x, {\bf M}) = u(x, {\bf N}).  
$$
\end{theo}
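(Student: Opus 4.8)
The plan is to establish the two inequalities separately. The inequality $u(x,{\bf M}) \le u(x,{\bf N})$ is immediate: since $\cF_t \subseteq \cG_t \subseteq \hat\cG_t$, every ${\bf f}$-adapted process is ${\bf g}$-adapted, and because the wealth process $V^{x,B}$, the state constraint $V^{x,B}\in K$, and the reward $U(V^{x,B}_T)$ all depend only on $(x,B,S)$ and not on the filtration, the admissible class $\cA(x)$ computed in ${\bf M}$ is contained in the one computed in ${\bf N}$. Taking suprema gives the claim.

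The content of the theorem is the reverse inequality $u(x,{\bf N}) \le u(x,{\bf M})$, which I would prove by viewing an ${\bf N}$-strategy as a randomized ${\bf f}$-strategy and averaging. Fix $B \in \cA(x)$ admissible for ${\bf N}$ and let $\mu$ denote the uniform law of $\xi$. Each $B_t$ is $\hat\cG_t$-measurable with $\cG_t = \sigma\{\cF_t, \xi\}$, so after completion a functional-representation argument (of the kind underlying the Doob--Dynkin theorem) yields an $\cF_t \otimes \cB([0,1])$-measurable map $\beta_t(\omega, v)$ with $B_t = \beta_t(\cdot, \xi)$ a.s.; for each fixed $v$ the process $B^v := \beta_\cdot(\cdot, v)$ is then ${\bf f}$-adapted. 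Since $S$ is a functional of $Y$ and hence $\cF^Y_T$-measurable, while $\xi$ is independent of $\cF^Y_T$, Fubini's theorem gives
$$
\E\, U(V^{x,B}_T) = \int_0^1 \E\, U(V^{x,B^v}_T)\, \mu(dv).
$$
If each $B^v$ is, for $\mu$-almost every $v$, an admissible ${\bf f}$-strategy, then every integrand is at most $u(x,{\bf M})$, so the same bound holds for the left-hand side; taking the supremum over all $B$ admissible for ${\bf N}$ yields $u(x,{\bf N}) \le u(x,{\bf M})$. Intuitively, randomizing over the coordinate $\xi$, which is independent of the price dynamics, cannot beat the best deterministic-in-$v$ response.

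The main obstacle is the regularity and admissibility of the disintegrated family $\{B^v\}$. One must choose a jointly measurable version of $\beta$ such that, for $\mu$-almost every $v$, the path $t \mapsto \beta_t(\omega, v)$ is RCLL and $K$-decreasing $P$-a.s.; I would obtain this by controlling $\beta$ on a countable dense set of times, invoking right-continuity together with the finite-variation and monotone structure from Lemma \ref{lemm:var}, and regularizing off a null set. The state constraint transfers by a Fubini argument on null sets: the RCLL paths make $\{V^{x,B} \notin K\}$ evanescent, hence contained in a single $(P \otimes \mu)$-null set whose $v$-sections are $P$-null for $\mu$-a.e.\ $v$, so that $V^{x,B^v} \in K$ up to $P$-evanescence and $B^v$ is admissible for ${\bf M}$. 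The hypothesis $x \in {\rm int}\, K$ is used to keep the optimization well-posed (a nonempty admissible class and well-defined expectations), so that the averaging identity is free of any $\infty - \infty$ ambiguity; Assumption {\bf H}, which guarantees that ${\bf N}$ is again a legitimate model satisfying {\bf H}, underlies the validity of the construction, while the decisive analytic input is the independence of $\xi$ from $\cF^Y_T$.
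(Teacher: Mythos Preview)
Your proof is correct and follows the same strategy as the paper: disintegrate an ${\bf N}$-strategy over the auxiliary variable $\xi$, verify that almost every slice is an admissible ${\bf M}$-strategy, and conclude by Fubini/averaging. The paper streamlines your regularity step by representing the entire process as a single Borel map $B=f(Y,\xi)$ into the path space $\cD^d_T$ (so RCLL and $K$-monotonicity of the slices $f(Y,a)$ come for free, and adaptedness is checked via $\pi_t\circ f = f_t(Y^t,\cdot)$ on a countable dense set), whereas you work time-by-time with $\beta_t(\omega,v)$ and must reassemble good paths by hand; both routes work. One small correction: Assumption {\bf H} plays no role in this particular proof---the only analytic input, as you rightly single out, is the independence of $\xi$ from $\cF^Y_T$.
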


\begin{proof}
    Let $B \in \cA(x, {\bf  N})$. Thanks to the measurability, for some Borel function $f: \cD^m_T \times [0, 1] \mapsto \cD^d_T$, the process $B = f(Y, \xi)$. Our goal is to verify that for a.a. $a \in [0, 1]$, the process $\bar B(a) := f(Y, a) \in \cA(x, {\bf M})$. 
    
    Fix $t \in [0, T]$. It is easily seen that $B_t = \pi_t(f(Y, \xi))$ where $\pi_t(x) = x_t$. On the other hand, $B_t$ is $\tilde \cF_t$-measurable implying $\pi_t(f(Y, \xi)) = f_t(Y^t, \xi)$ for some Borel function $f_t$. By independence of $Y$ and $\xi$ and the Fubini theorem, $f_t(Y^t, a) = \pi_t(f(Y, a))$ for a.a. $a \in [0, 1]$. It follows that for a dense countable set $I \subset [0, T]$ with $\{0, T \} \in I$, r.v. $\bar B_t(a)$ are $\cF_t$-measurable for a.a. $a \in [0, 1]$. Since $B_t = \lim_{I \ni s \to t+} B_s$ for $t \neq T$ and ${\bf F}$ is right-continuous, $\bar B(a)$ is adapted for a.a. $a \in [0, 1]$. On the other hand, 
    $$
    \1_{x + 1/S \cdot B \in \wh K_t, \, \forall t \in [0, T] } = g(Y, \xi)
    $$
    for some Borel function $g$. Immediately, $g(Y, \xi) = 1$ a.s. Again, due to the Fubini theorem, there exists a set $\Omega' \subseteq \Omega$ of probability one such that $g(Y(\omega), a) = 1$ for a.a. $a \in [0, 1]$ for all $\omega \in \Omega'$. By the same technique, we verify that $\bar B(a)$ is $K$-decreasing for a.a. $a \in [0, 1]$ for all $\omega \in \Omega''$ from a set $\Omega''$ of probability one. Thus, we have established that $\bar B(a) \in \cA(x, {\bf M})$ for a.a. $a \in [0, 1]$.

    Finally, fix $\varepsilon > 0$. Then, for some strategy $B \in \cA(x, {\bf N})$,
    \begin{multline*}
    u(x,  {\bf N}) - \varepsilon \leq \E U(V^{x, B}_T, S) = \E \left[ \E \left[ U(V^{x, B}_T, S) \, | \, \xi \right] \right] = \\ 
     \int_{[0, 1]} \E  U(V^{x, \bar B(a)}_T, S)  \, da \leq \int_{[0, 1]} u(x, {\bf M}) da = u(x, {\bf M}).
    \end{multline*}
    As $\varepsilon$ is arbitrary, the assertion follows.
\end{proof}

Note that for more complicated goal functionals, the Bellman functionals may change after the initial enlargement of the filtration, see \cite{Carassus-Rasonyi2015} for a detailed treatment.

The main result of this work is 
\begin{theo}
\label{theo2}
    Let ${\bf M}$ and ${\bf \tilde M}$ be two different models that satisfy {\bf H} and the laws $\cL_P(Y, S) = \cL_{\tilde P}(\tilde Y, \tilde S)$. Then $u(x, {\bf M}) = u(x, {\bf \tilde M})$.
\end{theo}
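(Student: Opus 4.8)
The plan is to prove the two inequalities $u(x,{\bf M}) \le u(x,{\bf \tilde M})$ and $u(x,{\bf \tilde M}) \le u(x,{\bf M})$ by transferring an arbitrary admissible strategy from one model to the other while preserving its expected utility; since the hypothesis $\cL_P(Y,S) = \cL_{\tilde P}(\tilde Y,\tilde S)$ is symmetric, it suffices to produce, for each $B \in \cA(x,{\bf M})$, a strategy $\tilde B \in \cA(x,{\bf \tilde M})$ with $\E_{\tilde P}\,U(\tilde V^{x,\tilde B}_T) = \E_P\,U(V^{x,B}_T)$. The only genuinely delicate point — and the reason a naive Skorokhod transfer fails — is that $\tilde B$ must be adapted to $\tilde\bbf$; everything else (the $K$-decreasing property, the state constraint $V \in K$, and the value of $U(V_T)$) is a Borel functional of the pair of paths $(S,B)$ through the Riemann--Stieltjes map $\wh V = x + (1/S)\cdot B$, $V = S \odot \wh V$, and so is automatically preserved once the joint law of $(\tilde Y,\tilde S,\tilde B)$ is shown to equal that of $(Y,S,B)$.

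To build $\tilde B$ I would follow the skeleton construction already used for Theorem~\ref{theo:rand}. Fix a countable dense set $I \subset [0,T]$ with $\{0,T\}\subset I$. Since $\bbf$ is generated by $Y$ and the null sets, each $B_t$ is a.s.\ equal to an $\cF^Y_t$-measurable variable, so by Doob--Dynkin there are Borel maps $g_t$ with $B_t = g_t(Y^{t})$ $P$-a.s.\ for $t \in I$, where $Y^t$ is the frozen path from the displayed definition; the fact that $g_t$ is evaluated at $Y^t$ encodes the required non-anticipation. Because $B$ has RCLL paths and $I$ is dense, $B$ coincides $P$-a.s.\ with $G(Y)$, where $G$ reconstructs an RCLL path from its right limits along $I$ — a Borel operation on $\cD^d_T$. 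Likewise $S = \Psi(Y)$ $P$-a.s.\ for a Borel $\Psi$, and the hypothesis forces $\tilde S = \Psi(\tilde Y)$ $\tilde P$-a.s. I would then simply \emph{define} $\tilde B := G(\tilde Y)$. As the a.s.\ identities ``$G(y)$ is RCLL with $\pi_t(G(y)) = g_t(y^t)$ for all $t\in I$'' hold for $\cL_P(Y)$-a.a.\ $y$ and $\cL_P(Y) = \cL_{\tilde P}(\tilde Y)$, they hold $\tilde P$-a.s.; consequently $(Y,S,B)$ and $(\tilde Y,\tilde S,\tilde B)$ are the images of $Y$ and $\tilde Y$ under the common Borel map $y \mapsto (y,\Psi(y),G(y))$, so their laws coincide. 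This yields the admissibility of $\tilde B$ (the state-constraint event has full measure), its $K$-decreasing paths, and the equality of expected utilities.

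The crux, as noted, is that $\tilde B$ is $\tilde\bbf$-adapted. For $t \in I$ this is immediate: the transferred identity gives $\tilde B_t = g_t(\tilde Y^{t})$ $\tilde P$-a.s., which is $\tilde\cF^{\tilde Y}_t$- and hence $\tilde\cF_t$-measurable. To pass from $I$ to an arbitrary $t<T$ I would write $\tilde B_t = \lim_{I \ni s \downarrow t}\tilde B_s$ using right-continuity of the paths; the limit is $\tilde\cF_t$-measurable precisely because $\tilde\bbf$ is right-continuous. This is where Assumption~{\bf H} does the real work: for a process with independent increments the augmented natural filtration satisfies the usual conditions, so both $\bbf$ and $\tilde\bbf$ are right-continuous and the skeleton argument closes. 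Should any measurable-selection difficulty arise in handling the $g_t$, one can first enlarge both filtrations by an independent uniform and invoke Theorem~\ref{theo:rand} to see that this enlargement leaves the Bellman function unchanged. Assembling the pieces gives $u(x,{\bf \tilde M}) \ge u(x,{\bf M})$, and exchanging the roles of the two models yields the reverse inequality, hence equality.
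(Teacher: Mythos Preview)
Your argument is correct, and the route differs from the paper's in a meaningful way. The paper represents the strategy as $B=f(Y,\xi)$ via Lemma~31 of \cite{Chau-Rasonyi2017}, transfers it to $\tilde B=f(\tilde Y,\tilde\xi)$ in the \emph{enlarged} model $\tilde{\bf N}$, and then, for every fixed $t$, uses the independence structure (Lemma~29 of \cite{Chau-Rasonyi2017}: $\tilde B_t$ is $\sigma(\tilde Y^t,\,_t\!\tilde Y,\tilde\xi)$-measurable, independent of $_t\!\tilde Y$, while $_t\!\tilde Y\perp(\tilde Y^t,\tilde\xi)$) to conclude that $\tilde B_t$ is $\sigma(\tilde Y^t,\tilde\xi)$-measurable; finally Theorem~\ref{theo:rand} removes the extra randomization. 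You instead exploit that $\bbf$ is the augmented filtration of $Y$ to write $B_t=g_t(Y^t)$ with no $\xi$, build a path map $G$ with $B=G(Y)$, and transfer directly into $\tilde{\bf M}$; adaptedness is obtained on a countable dense set and extended by right-continuity of $\tilde\bbf$, which {\bf H} indeed guarantees. Your approach is more elementary---it bypasses both external lemmas and the randomization detour---while the paper's argument yields $\tilde\cG_t$-measurability at \emph{every} $t$ without appealing to right-continuity in this proof and plugs cleanly into the already-proved Theorem~\ref{theo:rand}. One small remark: the step ``$\tilde S=\Psi(\tilde Y)$ $\tilde P$-a.s.'' is correct (the joint law is concentrated on the graph of $\Psi$), and you should perhaps say explicitly that the skeleton-to-RCLL reconstruction is made into a globally defined Borel map $\cD^p_T\to\cD^d_T$ by assigning a fixed path on the (Borel) set where the right limits fail to produce an RCLL function.
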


\begin{proof}
    Let $B \in \cA(x, {\bf M})$.  Due to Lemma 31 of \cite{Chau-Rasonyi2017}, the strategy can be represented as a Borel function $f: \cD^k_T \times [0,1] \to \cD^d_T$ of $Y$ and the r.v. $\xi$ as follows: $B = f(Y, \xi)$. Put $\tilde B := f(\tilde Y, \tilde \xi)$. Our aim is to verify that $\tilde B \in \cA(x,  {\bf \tilde  N})$, where ${\bf \tilde N}$ is the model with additional randomization. $\tilde B$ is $K$-decreasing since $\cL_{P}(B)$ charges probability one to $\cH_K$. Analogously, $\cL_{P} (S, B)$ charges probability one to a set 
    $$
    \left\{ (x, b) : \, x \in C^d_{++}([0, T]) \times \cH_K, \, x_t \odot (1/x \cdot b_t) \in K \; \forall t \in [0, T] \right\},
    $$
    implying that $B'$ satisfies the admissibility property. 
    
    It remains to establish that $\tilde B_t$ is $\tilde \cG_t$-measurable. Note that
    $$
    \cL(B'_t, Y'_u - Y'_t) = \cL(B'_t) \otimes \cL(Y'_u - Y'_t)
    $$
    for any pair $0 \leq t \leq u \leq T$. Following \cite{Chau-Rasonyi2017}, we put 
    $$
    \tilde Y^{t} := \tilde Y \1_{\llbracket 0, t \llbracket} + \tilde Y_t \1_{\llbracket t, \infty \llbracket}  
    $$
    and
    $$
    _t \! \tilde Y := (\tilde Y - \tilde Y_t) \1_{\llbracket t, \infty \llbracket}.  
    $$
    Then $\tilde B_t$ is $\sigma\{ \tilde Y^{t}, \,_t \! \tilde Y, \tilde \xi \}$-measurable and independent of $\,_t \! \tilde Y$. Besides, $\,_t \! \tilde Y$ is independent of $(\tilde Y^{t}, \, \tilde \xi)$. Due to Lemma 29 of \cite{Chau-Rasonyi2017}, $\tilde B_t$ is $\sigma\{ \tilde Y^{t}, \tilde \xi \}$-measurable. 
    As the choice of $B$ is arbitrary,
    $$
    u(x, {\bf M}) \leq u(x, {\bf \tilde N}).
    $$
    By virtue of Theorem \ref{theo:rand}, 
    $$
    u(x, {\bf M}) \leq u(x, {\bf \tilde M}).
    $$
    By swapping the positions of ${\bf M}$ and ${\bf \tilde M}$, we arrive at the required assertion.
\end{proof}

The above results admits a certain generalization if the underlying process $Y$ does not have independent increments. Recall the definition of the prediction process (see, e.g. \cite{Aldous1981, Jakubowski-Slominski1986}). Let ${\cal X}$ be a Polish space and let $X$ be ${\cal X}$-valued process with paths from the Skorokhod space $D({\cal X})$ (i.e. the space of RCLL functions $f: [0, \infty) \mapsto \cX$), adapted to the filtration $\bbf$. Process $Z$ with values from ${\cal P}({\cal X})$ and paths from $D({\cal P}({\cal X}))$ is called the prediction process of $(X, (\cF_t)_{t=0}^T)$ if $\forall t \geq 0 \, \forall A \in \cB(\cD(\cX))$
$$
Z(t, \omega)(A) = P(X \in A \, | \, \cF_t) \quad P{\rm - a.s.,}
$$ 
 i.e. $Z_t$ is a version of the regular conditional distribution of $X$ given $\cF_t$. It is proven (see, again \cite{Aldous1981, Jakubowski-Slominski1986})) that $Z$ is well-defined and 
$$
Z(\tau(\omega), \omega)(A) = P(X \in A \, | \, \cF_\tau ) \quad P{\rm - a.s.,}.
$$
for all $A \in \cB(\cD(\cX))$ finite stopping times $ \tau $.

The following theorem is a generalization of Theorem \ref{theo2}. We omit  assumption ${\bf H}$ by utilizing the prediction process techinque. 

\begin{theo}
    Let ${\bf M}$ and ${\bf \tilde  M}$ be two different models. Assume that the distributions $\cL_P(S, Y) = \cL_{\tilde P}(\tilde S, \tilde Y)$. Then $u(x, {\bf M}) = u(x, {\bf \tilde M})$. 
\end{theo}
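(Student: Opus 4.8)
The plan is to rerun the argument of Theorem \ref{theo2}, letting the prediction process $Z$ of the pair $X := (S, Y)$ play the role that the independent-increment process $Y$ played there. By the symmetry between $\mathbf{M}$ and $\tilde{\mathbf{M}}$ it suffices to prove $u(x, \mathbf{M}) \le u(x, \tilde{\mathbf{M}})$, so I fix an admissible strategy $B \in \cA(x, \mathbf{M})$.

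First I would set up the prediction process and record the two properties that drive the proof. Since $S$ is $\bbf$-adapted and $\bbf$ is generated by $Y$, a fixed version of the regular conditional distribution $Z_t = \cL_P(X \mid \cF_t)$ is a Borel functional of the stopped path $Y^t$ that depends only on the common law $\cL_P(X) = \cL_{\tilde P}(\tilde X)$. Choosing one such functional simultaneously for both models and building $\tilde Z$ from it, I obtain $\cL_P(Z, X) = \cL_{\tilde P}(\tilde Z, \tilde X)$. The second property is that $Z$ regenerates the filtration: for $s \le t$ the time-$s$ marginal of $Z_t$ is the Dirac mass at $X_s$, so $X_s$, and in particular $Y_s$, is $\sigma(Z_s)$-measurable; combined with the adaptedness of $Z$ this gives $\cF_t = \sigma(Z_u : u \le t) \vee \cN$ up to null sets, and likewise in $\tilde{\mathbf{M}}$.

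Next I would transfer the strategy. By the functional representation (Lemma 31 of \cite{Chau-Rasonyi2017}, now applied with $Z$ as the driving process) write $B = F(Z, \xi)$ for a Borel $F$, and set $\tilde B := F(\tilde Z, \tilde\xi)$ on the randomized model $\tilde{\mathbf{N}}$. The $K$-decrease of $\tilde B$ and the admissibility constraint $V^{x,\tilde B}_t \in K$ for all $t$ are properties of the law of $(S, B)$ alone; as $(S, B)$ is a functional of $(Z, \xi)$ and $\cL_P(Z, S) = \cL_{\tilde P}(\tilde Z, \tilde S)$, they charge probability one and hence transfer. For adaptedness I would argue as in Theorem \ref{theo2}, with the Markov property of the prediction process substituting for assumption $\mathbf{H}$: $\tilde B_t$ is a functional of the whole path of $\tilde Z$ that, by transfer of the corresponding distributional identity from $\mathbf{M}$, is conditionally independent of the post-$t$ evolution of $\tilde Z$ given $\tilde Z_t$; since $\tilde Z$ is Markov this post-$t$ evolution is conditionally independent of the past $\tilde Z^t$ given $\tilde Z_t$, and a conditional form of Lemma 29 of \cite{Chau-Rasonyi2017} forces $\tilde B_t$ to be $\tilde\cG_t$-measurable. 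Equivalently, using the regeneration property, $B_t = g_t(Z^t)$ almost surely for some Borel $g_t$, and this identity transfers to $\tilde B_t = g_t(\tilde Z^t)$ because $\cL_P(Z, \xi) = \cL_{\tilde P}(\tilde Z, \tilde\xi)$. Thus $\tilde B \in \cA(x, \tilde{\mathbf{N}})$, and since $\cL_P(S, B) = \cL_{\tilde P}(\tilde S, \tilde B)$ the goal values coincide, giving $u(x, \mathbf{M}) \le u(x, \tilde{\mathbf{N}})$; Theorem \ref{theo:rand} removes the extra randomization, and swapping the two models closes the argument.

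The main obstacle is the adaptedness transfer. Everything distributional — the value, $K$-decrease, admissibility — moves between the two spaces for free once the prediction processes are matched, but adaptedness is not a property of a single finite-dimensional law and must be recovered from the conditional-independence structure of $Z$. Making this rigorous requires two things: fixing a single measurable version of the prediction kernel so that $Z$ and $\tilde Z$ are genuinely produced by the same map, since regular conditional distributions are defined only up to null sets and the whole transfer hinges on a common version; and justifying the conditional analogue of Lemma 29, i.e. that the Markov property of the prediction process supplies exactly the decoupling of past and future that independent increments of $Y$ supplied in Theorem \ref{theo2}. The passage from a countable dense set of times to all $t \in [0, T]$, handled via right-continuity of the filtration and the RCLL property as in Theorem \ref{theo:rand}, is routine by comparison.
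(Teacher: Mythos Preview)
The genuine gap is your final step: you invoke Theorem \ref{theo:rand} to remove the randomization by $\xi$, but that theorem is stated and proved under assumption $\mathbf{H}$, which is exactly the hypothesis being dropped here. Without a separate argument that initially enlarging $\tilde\bbf$ by $\tilde\xi$ leaves the Bellman function unchanged in the absence of independent increments, the inequality $u(x,\tilde{\mathbf N})\le u(x,\tilde{\mathbf M})$ is unproved. In fact the randomization is superfluous: since $\bbf$ is generated by $Y$ (and null sets) and $B$ is $\bbf$-adapted, the whole path of $B$ is already, a.s., a Borel functional of $Y$ alone, so Lemma 31 of \cite{Chau-Rasonyi2017} is not needed and the detour through $\xi$ and Theorem \ref{theo:rand} can be avoided entirely.

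The paper's route also differs in which prediction process is used: it takes the prediction process $Z^{B}$ of the \emph{strategy} $B$, not of $(S,Y)$. Writing $(B,Z^{B})=(f(Y),g(Y))$ and setting $(\tilde B,\tilde Z^{B}):=(f(\tilde Y),g(\tilde Y))$, one verifies directly from $\cL_P(Y)=\cL_{\tilde P}(\tilde Y)$ that $\tilde Z^{B}$ is the prediction process of $\tilde B$ with respect to $\tilde\bbf$; the standing property $\tilde\pi_t(Z^{\tilde B}(t))=\delta_{\tilde B_t}$ then makes adaptedness of $\tilde B$ a one-line consequence of Lemma \ref{lemm:a1}. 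This sidesteps both the Markov/conditional-independence argument you flag as the main obstacle and the need to fix a common kernel for the prediction of $(S,Y)$ across the two spaces. Your ``equivalently'' route (transfer $B_t=g_t(Z^t)$ by equality of laws) would also work once $\xi$ is dropped, but it trades the single verification that $\tilde Z^{B}$ is a prediction process for the pair of verifications that $Z,\tilde Z$ come from one kernel and regenerate the respective filtrations.
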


\begin{proof}
    Let $B \in \cA(x, {\bf M})$. We construct its prediction process $Z^{B}(t, \omega)$. It is easily seen that $(B, Z^{B}) = (f(Y), g(Y))$ a.s. for some Borel functions $f: \cD^k_T \to \cD^d_T$ and $g: \cD^k_T \to  \cD(\cP(\cD^{d}_T))$ . We define $(\tilde B, \tilde Z^{B}) := (f(\tilde Y), g(\tilde Y))$. 
    
    Denote $\pi_u: \cD^{d}_T \mapsto \bbr^{d}$ the following projection mapping: $\pi_u(x):=x(u)$ and the mapping $\tilde \pi_u: \cP(\bbr^{d}) \mapsto \cP(\cD^{d}_T)$ as $\pi_u(\mu) := \mu \circ \pi_u^{-1}$.
    The prediction process has the following property(see \cite{Aldous1981,Jakubowski-Slominski1986}): there exists a set $ \Omega' \subseteq \Omega$ of probability one such that for all $\omega \in  \Omega'$ and $0 \leq u \leq t \leq T$
    $$
    \tilde \pi_u (Z^{B}(t, \omega)) = \delta_{B_u(\omega)}.
    $$
    As such, 
    $$
    \tilde \pi_u (g(y)(t)) = \delta_{f(y)(u)}, \quad \cL_P(Y)-{\rm a.e.}.
    $$
    Since any probability measure on a Polish space can be distinguished by a countable set of bounded continuous functions, we have the existence of a set $\tilde \Omega' \subseteq \tilde \Omega$ such that
    $$
    \tilde \pi_u (\tilde Z^{B}(t, \omega)) = \delta_{f(\tilde Y(\omega))(u)}.
    $$
     We aim to prove that $\tilde Z^{B} = Z^{\tilde B}$ a.s., i.e. that $\tilde Z^{B}$ is the prediction process of $(\tilde B, \tilde \bbf)$. To that end, we choose $t \in [0, T]$, a set $A \in \cB(\cD^{d}_T)$, a set $C \in \cB(\cD^k_T)$ and we verify that
    \beq
    \label{eq:ext}
    \tilde \E {\bf 1}_{\tilde B \in A} {\bf 1}_{\tilde Y^t \in C} = \tilde \E \tilde Z^{B}(t)(A) {\bf 1}_{Y^t \in C}.
    \eeq
    Immediately,
    $$
    \tilde \E {\bf 1}_{B \in A} {\bf 1}_{\tilde Y^t \in C} = \E {\bf 1}_{B \in A} {\bf 1}_{ Y^t \in C}.
    $$
    On the other hand, 
    $$
    \tilde \E \tilde Z^{B}(t)(A) {\bf 1}_{\tilde Y^t \in C} = \tilde \E \tilde g(\tilde Y)(t)(A) {\bf 1}_{\tilde Y^t \in C} =  \E  g( Y)(t)(A) {\bf 1}_{ Y^t \in C} = \E Z^{B}(t)(A) {\bf 1}_{ Y^t \in C}.
    $$
    By definition of the prediction process $Z^{B}$, 
    $$
     \E {\bf 1}_{B \in A} {\bf 1}_{ Y^t \in C} =  \E  Z^{B}(t)(A) {\bf 1}_{Y^t \in C},
    $$
    and \eqref{eq:ext} follows. 
    Also, since $\cL_{\tilde P}( Z^{\tilde B}) = \cL_{\tilde P}(\tilde Z^{B}) = \cL_P(Z^{B})$, we have for $t \in [0, T]$
    $$
    \tilde \pi_t (\tilde Z^{B} (t, \omega)) = \delta_{f(\tilde Y(\omega))(u))(u)}.
    $$
    By Lemma \ref{lemm:a1}, we obtain that $\tilde B$ is $\tilde \bbf$-adapted.  As in the proof of Theorem \ref{theo2}, we establish that $\tilde B$ is $K$-decreasing and satisfies the admissibility property. Finally, we obtain that $\tilde B \in \cA(x, {\tilde M})$. 
    Since $\cL_P(Y, S, B) = \cL_{\tilde P}(\tilde Y, \tilde S, \tilde B)$, we have
    $$
    \tilde \E \left( x + \frac{1}{\tilde S} \cdot \tilde B_T  \right) =  \E \left( x + \frac{1}{ S} \cdot  B_T  \right), 
    $$
    we have $u(x, {\bf M}) \leq u(x, {\bf \tilde M})$. By switching ${\bf M}$ and ${\bf \tilde M}$, we get the converse inequality.
\end{proof}

{\bf Acknowdgements.} I would like to express my sincere gratitude to my supervisor, Professor Yuri Kabanov, for his guidance and support throughout this research.

\section{Appendix}

\begin{lemm}
\label{lemm:var}
Let $X = (X_t)$ be an adapted  process with paths from $\cD^d_T$ and $K$ be a closed proper convex cone. Then the following properties are equivalent:

$(i)$ $X$ is $K$-decreasing;

$(ii)$ $X$ is of bounded variation with the Radon--Nikodym derivative $\dot X := d X / d {\rm Var}\, X$ evolving in $-K$ where 
${\rm Var}_t \, X:=\sum_{i\le d} {\rm Var}_t \, X^i$ is the total variation of $X$ on $[0, t]$. 
\end{lemm}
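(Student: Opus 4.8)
The plan is to prove both implications through the dual-cone description of $-K$ and the polar (Radon--Nikodym) decomposition of the vector measure $dX$. I would work pathwise, fixing $\omega$ and treating $t \mapsto X_t(\omega)$ as a deterministic RCLL path; the measurability of $\dot X$ as a process is then a routine consequence of a measurable selection in the polar decomposition and can be addressed separately at the end.

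For $(i) \Rightarrow (ii)$ I would first establish bounded variation. Since $K$ is proper and closed with $\bbr^d_{++} \subset K$, its dual $K^*$ has nonempty interior; pick $y^* \in {\rm int}\, K^*$. By compactness of $K \cap \{|x| = 1\}$ and continuity of $x \mapsto y^* x$, which is strictly positive there, there is $c > 0$ with $y^* x \geq c|x|$ for every $x \in K$. If $X$ is $K$-decreasing then $X_{t_{i-1}} - X_{t_i} \in K$ for any partition $0 = t_0 < \dots < t_n = t$, so $\sum_i |X_{t_i} - X_{t_{i-1}}| \le c^{-1}\sum_i y^*(X_{t_{i-1}} - X_{t_i}) = c^{-1} y^*(X_0 - X_t)$, a telescoping bound independent of the partition. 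Hence ${\rm Var}_t\, X < \infty$ and $dX$ is a finite $\bbr^d$-valued measure admitting the polar decomposition $dX = \dot X \, d{\rm Var}\, X$ with $|\dot X| = 1$ a.e. Next I would pin down the sign of $\dot X$. By the bipolar theorem $K^{**} = K$, so $-K = \{x : y x \le 0 \ \forall y \in K^*\}$. Fixing a countable dense set $\{y_n\} \subset K^*$, the scalar measure $y_n\, dX = d(y_n X)$ is nonpositive on every interval $(s,t]$ because $X_t - X_s \in -K$, hence nonpositive as a signed measure; therefore $y_n \dot X \le 0$ holds $d{\rm Var}\,X$-a.e. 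Intersecting the countably many null sets and using density of $\{y_n\}$ in $K^*$ yields $y \dot X \le 0$ for all $y \in K^*$ a.e., i.e. $\dot X \in -K$ $d{\rm Var}\,X$-a.e., which is $(ii)$.

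For the converse $(ii) \Rightarrow (i)$, suppose $X$ has bounded variation with $\dot X \in -K$ a.e. Then for $s \le t$ and any $y \in K^*$ one has $y(X_t - X_s) = \int_{(s,t]} (y\,\dot X)\, d{\rm Var}\,X \le 0$, since the integrand is a.e. nonpositive and $d{\rm Var}\,X$ is nonnegative. By the dual description of $-K$ just recalled, $X_t - X_s \in -K$, so $X$ is $K$-decreasing.

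The main obstacle is the first implication. The telescoping estimate for bounded variation is clean, but it genuinely needs a strictly positive functional $y^* \in {\rm int}\, K^*$ dominating the Euclidean norm on $K$, which is exactly where properness of $K$ enters. The more delicate step is transferring the cone membership of all increments to an almost-everywhere statement about the density $\dot X$: this relies on closedness and conic duality of $K$ (via the bipolar theorem) together with separability, so as to replace the uncountable family of constraints $\{y x \le 0 : y \in K^*\}$ by a countable one without accumulating positive-measure exceptional sets. Care must also be taken with possible atoms of $dX$, in particular at the endpoints, but the same inequalities $y_n(X_t - X_s) \le 0$ apply to them directly.
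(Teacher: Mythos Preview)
Your proof is correct and follows the same dual-cone strategy as the paper: use that properness of $K$ forces ${\rm int}\,K^*\neq\varnothing$, reduce to scalar statements via elements of $K^*$, and then invoke the bipolar theorem to characterise $-K$. The only genuine difference is in the bounded-variation step. The paper picks $d$ linearly independent vectors $a^1,\dots,a^d\in K^*$ (obtained by perturbing an interior point along the canonical basis), observes that each scalar process $Z^j:=a^jX$ is monotone and hence of bounded variation, and recovers $X$ from the $Z^j$ by inverting the Gram matrix; your argument instead uses a single $y^*\in{\rm int}\,K^*$ and the norm estimate $y^*x\ge c|x|$ on $K$ to bound $\sum_i|X_{t_i}-X_{t_{i-1}}|$ directly by the telescoping sum $c^{-1}y^*(X_0-X_t)$. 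Your route is slightly more economical---one functional instead of $d$, no linear algebra---while the paper's decomposition makes the finite-variation property of each coordinate transparent. For the a.e.\ containment $\dot X\in -K$ and the converse $(ii)\Rightarrow(i)$ both proofs are essentially identical, testing against elements of $K^*$; your explicit use of a countable dense subset of $K^*$ is a welcome precision that the paper leaves implicit. One minor remark: the hypothesis $\bbr^d_{++}\subset K$ that you invoke is part of the paper's ambient setting but is not needed for this lemma---properness alone already gives ${\rm int}\,K^*\neq\varnothing$.
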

\begin{proof}
The implication $(ii) \Rightarrow (i)$ is obvious. Let us verify that $(i) \Rightarrow (ii)$. 
As $K$ is proper, $K^*$ has a non-empty interior and, therefore, there is a point $y\in K^*$ such that  $y+e^j \in K^*$ for all vectors $e^j$ of the canonical basis.  It follows that $K^*$ contains  $d$ linear independent vectors $a^j$. 
Define the scalar processes  $Z^j: = a^j X $. 
Since  $X_s - X_t \in K$ for $0 \leq s \leq t \leq T$, we have $Z^j_s - Z^j_t\ge 0$.  The coefficients $Y^j_t$, $j = 1, \dots, d$, of a linear combination
$$
X_t := \sum_{j=1}^d a^j Y_t^j 
$$ 
can be obtained as linear combinations of scalar products $Z_t$:
$$
Y_t = G^{-1} Z_t,
$$
where $G$ is the Gram matrix of the basis $(a^j)_{j=1}^d$. It implies that $Y^j$ are RCLL processes of bounded variation and so is $X$. By virtue of Lemma I.3.13 of \cite{JS}, the Radon--Nikodym derivatives $\dot X_t:=dX_t/d{\rm Var}_tX$ are well-defined. Since 
$$
a_i(X_s - X_t) = \int_{]s, t]} a \dot X_u d {\rm Var}_u X \le 0, \quad dP \otimes d {\rm Var}_T \, X-{\rm a.e.},
$$ 
$\dot X \in K$ up to a set of $dP \otimes d \, {\rm Var}_T \, X$-measure zero. By redefining $\dot X$ on this set, we arrive at the required statement. 
\end{proof}


\begin{lemm}
\label{lemm:a1}
    Let $\cA$ and $\cB$ be two Polish spaces and let $j: \cA \times \cB \to \bbr$ be a Borel mapping. Let $({\bf a}, {\bf b})$ be a $\cA \times \cB$-valued random variable. If the regular conditional distribution
    $$
    \nu ({\bf a}, C) := P(j({\bf a}, {\bf b}) \in C \, | \, \sigma\{ {\bf a} \}) = {\bf 1}_{j({\bf a}, {\bf b}) \in C}, \quad P-{\rm a.s.}
    $$
    for any $C \in \cB(\bbr)$ then $j({\bf a}, {\bf b}) = k({\bf a})$ $P$-a.s. for some Borel function $k$.
\end{lemm}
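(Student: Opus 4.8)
The plan is to read the hypothesis as the assertion that the real-valued random variable $W := j({\bf a}, {\bf b})$ admits a regular conditional distribution given $\sigma\{{\bf a}\}$ that is, almost surely, the Dirac mass $\delta_{W}$. From here the target reduces to showing that $W$ is measurable with respect to (the $P$-completion of) $\sigma\{{\bf a}\}$, after which the Doob--Dynkin factorization lemma yields a Borel $k$ with $W = k({\bf a})$ $P$-a.s. First I would upgrade the hypothesis, which is stated for each fixed $C$ up to a $P$-null set possibly depending on $C$, to a single almost-sure identity of measures. Fixing a countable $\pi$-system $\{C_n\}$ generating $\cB(\bbr)$ (for instance the intervals with rational endpoints), the exceptional sets accumulate to a single $P$-null set outside of which $\nu({\bf a}, C_n) = {\bf 1}_{W \in C_n}$ for all $n$; since $\nu({\bf a}, \cdot)$ and $\delta_{W}$ are probability measures agreeing on a generating $\pi$-system, they coincide $P$-a.s.

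Next I would test the conditional distribution against a bounded injective Borel function, say $\phi := \arctan \colon \bbr \to (-\pi/2, \pi/2)$. By the defining property of the regular conditional distribution and the identity $\nu({\bf a}, \cdot) = \delta_{W}$ just established,
$$
\E\bigl[\, \phi(W) \,\big|\, \sigma\{{\bf a}\} \,\bigr] = \int_{\bbr} \phi(y)\, \nu({\bf a}, dy) = \phi(W), \quad P\text{-a.s.}
$$
Thus the bounded random variable $\phi(W)$ equals its own conditional expectation given $\sigma\{{\bf a}\}$, and therefore agrees $P$-a.s. with the $\sigma\{{\bf a}\}$-measurable function $\E[\phi(W) \mid \sigma\{{\bf a}\}]$. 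Since $\phi$ is a Borel isomorphism onto its image, inverting gives $W = \tan(\phi(W))$, which is likewise $\sigma\{{\bf a}\}$-measurable up to a $P$-null set. Applying the factorization lemma to a genuinely $\sigma\{{\bf a}\}$-measurable version then produces a Borel $k \colon \cA \to \bbr$ with $j({\bf a}, {\bf b}) = W = k({\bf a})$ $P$-a.s.

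I expect the only real subtlety --- rather than a genuine obstacle --- to lie at the two measure-theoretic seams: passing from the per-$C$ hypothesis to the almost-sure equality of measures (handled by the countable generating $\pi$-system above), and passing from measurability with respect to the $P$-completion of $\sigma\{{\bf a}\}$ to a bona fide Borel function of ${\bf a}$ (handled by replacing the completed-measurable function with an a.s.-equal $\sigma\{{\bf a}\}$-measurable version before invoking Doob--Dynkin). The use of a bounded injective $\phi$ is precisely what legitimizes the conditional-expectation computation without any integrability assumption on $W$ itself, while its Borel invertibility is what transfers the measurability back from $\phi(W)$ to $W$.
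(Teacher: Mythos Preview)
Your proof is correct and follows essentially the same route as the paper: both show that a bounded version of $W = j({\bf a}, {\bf b})$ equals its own conditional expectation given $\sigma\{{\bf a}\}$ via the regular conditional distribution, hence is $\sigma\{{\bf a}\}$-measurable, and then invoke Doob--Dynkin. Your explicit use of $\phi = \arctan$ and the $\pi$-system upgrade are simply careful renderings of the paper's terse ``without loss of generality $j$ is bounded'' and its implicit handling of the null sets.
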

\begin{proof}
    Without loss of generality, we assume that $j$ is bounded. Then, 
    $$
    \E [j({\bf a}, {\bf b}) \, | \, \sigma\{ {\bf a} \}] = \int_\bbr x  \nu({\bf a}, dx) = j({\bf a}, {\bf b}), \quad P-{\rm a.s.}
    $$
    By definition, $\E [j({\bf a}, {\bf b}) \, | \, \sigma\{ {\bf a} \}]$ is $\sigma\{ {\bf a} \}$-measurable, and the assertion follows.
\end{proof}

\end{document}